\documentclass{article}

\linespread{1.8}
\setlength{\oddsidemargin}{1cm}
\setlength{\evensidemargin}{1cm}
\setlength{\topmargin}{-2cm}
\setlength{\textwidth}{5.5in}
\setlength{\textheight}{9.5in}
\setlength{\footskip}{1cm}
\usepackage{mathrsfs}
\usepackage{amsfonts}
\usepackage{amsmath}
\usepackage{amsthm}
\usepackage[all]{xy}
\usepackage{sectsty}
\usepackage{titlesec}
\usepackage{amssymb}
\usepackage{graphicx}
\pagestyle{plain}

\newtheorem{theorem}{Theorem}[section]

\newtheorem{lemma}[theorem]{Lemma}
\newtheorem{proposition}[theorem]{Proposition}

\textheight=230mm \topmargin=-11mm \textwidth=142mm \oddsidemargin=17mm

\theoremstyle{plain}

\newcommand{\eb}{\begin{enumerate}}
\newcommand{\ee}{\end{enumerate}}
\newcommand{\ebx}{\begin{equation*}}
\newcommand{\eex}{\end{equation*}}

\newcommand{\beq}{\begin{eqnarray*}}
\newcommand{\eeq}{\end{eqnarray*}}

\makeatletter

\newcommand{\Rmnum}[1]{\expandafter\@slowromancap\romannumeral #1@}
\makeatother

\begin{document}

\centerline{\Large{A weaker rigidity theorem for pairs of hyperquadrics and its application}}

\bigskip

\centerline{Abstract}

\footnotesize{In this short article, we establish a rigidity theorem for pairs of hyperquadrics in a weaker sense, i.e., we impose a condition that minimal rational curves are preserved, which is stronger than inheriting a sub-VMRT structure, a notion raised by Mok \& Zhang (2014) (cf. \cite{[MoZ]}). This problem has its source in a theorem of Tsai (1993) (\cite{[Tsai 93]}),  and the main result of this article can be applied back to give a more intrinsic proof of Tsai's theorem. }

\normalsize


\section{Introduction}
\label{section 1}
Let $(Q^n,Q^m)$ be a pair of hyperquadrics of dimensions $n,m$ respectively ($n<m$) and suppose that $Q^m\subset\mathbb{P}^{m+1}$ is defined by $z_1^2+\cdots +z_m^2-2z_{m+1}z_{m+2}=0$. Then there is a natural embedding $i:Q^n\hookrightarrow Q^m$ defined by letting $z_{n+1}=\cdots =z_{m}=0$, i.e., $i$ makes $Q^n$ be expressed as $z^2_1+\cdots +z^2_n-2z_{m+1}z_{m+2}=0$ in $\mathbb{P}^{n+1}\subset\mathbb{P}^{m+1}$. $i(Q^n)\subset Q^m$ is a totally geodesic (or ``flat'' in non-technical words) submanifold. By the action of Aut$(Q^m)\cong SO(m+2,\mathbb{C})$, we obtain a (possibly non-totally geodesic) submanifold $g\circ i(Q^n)\subset Q^m$, we call it a \textsl{standard model} in $Q^m$ for any given $g\in$Aut$(Q^m)$ (cf. \cite{[MoZ]}) (of course when $g=$id$\in$Aut$(Q^m)$, the standard model is the flat one). In this article, we aim at proving the following theorem:

\textbf{Main Theorem} \textsl{Suppose $U\subset Q^m$ is an open subset and $S\subset U$ is a local $n$-dimensional complex submanifold of $Q^m$. If $S$ satisfies: 1. for any $x\in S,\ \ \mathbb{P}(T_xS)\cap\mathscr{C}_x(Q^m)\cong Q^{n-2}$; 2. germ of any minimal rational curve (MRC) $L\subset Q^m$ issuing from $x\in S$ such that $T_xL\subset T_xS$ also lies on $S$;  then $S$ is a subset of some standard model.}

$\mathscr{C}_x(Q^m)$ denotes the \textsl{Variety of Minimal Rational Tangents} (VMRT) of $Q^m$ at $x$ (for a comprehensive theory of VMRT, the reader may refer to \cite{[Mok 08]} or \cite{[HwM 99]}). It is well-known that $\mathscr{C}_x(Q^m)\cong Q^{m-2}$. The first condition in the Main Theorem is related to the notion of \textsl{sub-VMRT structure} formulated in \cite{[MoZ]}. In the case of pairs of hyperquadrics, this is a very loose condition. In fact, for a given hyperquadric $Q^{m-2}\subset\mathbb{P}^{m-1}$, a generic projective subspace $\mathbb{P}^{n-1}\subset\mathbb{P}^{m-1}$ intersects with $Q^{m-2}$ and produce a non-singular hyperquadric $Q^{n-2}\subset\mathbb{P}^{n-1}$. So a generic complex submanifold $S\subset Q^m$ will satisfy the first condition (or inherit a sub-VMRT structure modelled on $(Q^n,Q^m)$ in the sense of \cite{[MoZ]}) while $S$ is not necessarily any standard model. This simple fact concerning the ``flexibility'' of hyperquadric eflects that the pair $(Q^n,Q^m)$ is not rigid in the sense of \cite{[MoZ]}, we hope to establish a rigidity theorem $(Q^n,Q^m)$ in a sense weaker than \cite{[MoZ]}. This is why we introduce the second condition. In fact, if $S\subset Q^m$ is a subset of some standard model $Q^n\subset Q^m$, the property of the second condition naturally follows (cf. \cite{[Mok 89]})   

On the other hand, Tsai proved the isometric total geodesy (up to normalising constant) of the proper holomorphic mapping $f:\Omega_1\hookrightarrow\Omega_2$ between bounded symmetric domains $\Omega_1,\Omega_2$ whose rank satisfies rk$(\Omega_1)\geq$rk$(\Omega_2)\geq 2$ (cf. \cite{[Tsai 93]}). A key step in his proof is the following special case:


\begin{theorem}[proposition 2.1 \cite{[Tsai 93]}]
\label{Thm 1}
Suppose $f:D^{\Rmnum{4}}_{3}\rightarrow D^{\Rmnum{4}}_{n},\ \ n\geq 3$, is a proper holomorphic map. Then $f$ is totally geodesic isometric embedding (up to normalising constant).
\end{theorem}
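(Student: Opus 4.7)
The plan is to deduce Theorem \ref{Thm 1} as a consequence of the Main Theorem by viewing $D^{\Rmnum{4}}_n$ as an open subset of its compact dual $Q^n$ via the Borel embedding, and exhibiting the image $S := f(D^{\Rmnum{4}}_3)$ as a submanifold of $Q^n$ satisfying the two hypotheses of the Main Theorem. Pulling back the conclusion that $S$ lies in a standard model will then yield total geodesy of $f$.

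First I would show that, after removing a proper analytic subset if necessary, $S$ is a 3-dimensional complex submanifold of $Q^n$. The Remmert proper mapping theorem yields that $S$ is an irreducible 3-dimensional analytic subvariety of $D^{\Rmnum{4}}_n$, and standard facts on proper holomorphic maps between bounded symmetric domains of rank two (both $D^{\Rmnum{4}}_3$ and $D^{\Rmnum{4}}_n$ are of rank two) provide the needed regularity of $f$ onto its image. Next I would verify the sub-VMRT condition: at a generic smooth point $x = f(y) \in S$, the differential $df_y$ is injective, and $df_y$ sends $\mathscr{C}_y(D^{\Rmnum{4}}_3) \cong Q^1$ into $\mathscr{C}_x(Q^n) \cong Q^{n-2}$. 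Since $\mathbb{P}(T_x S) = \mathbb{P}^2$ and a generic linear $\mathbb{P}^2$ meets $Q^{n-2}$ in a conic, this forces $\mathbb{P}(T_x S) \cap \mathscr{C}_x(Q^n) \cong Q^1$.

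The main obstacle will be verifying the second hypothesis of the Main Theorem: every MRC $L \subset Q^n$ issuing from $x \in S$ with $T_x L \subset T_x S$ has germ lying in $S$. I would argue that inside $D^{\Rmnum{4}}_n$, the MRCs of $Q^n$ correspond to the characteristic rational disks of the bounded symmetric domain; the condition $T_x L \subset T_x S$ together with the sub-VMRT analysis above forces $T_x L$ to be the image under $df_y$ of a characteristic direction at $y$. Properness of $f$ allows one to trace characteristic disks through the boundary: the endpoints of a characteristic disk in $D^{\Rmnum{4}}_3$ lie on its Shilov-type boundary, and by properness their images approach the boundary of $D^{\Rmnum{4}}_n$. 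Analysis of the characteristic foliation, combined with the uniqueness of the MRC through $x$ tangent to the chosen direction, should show that $f$ maps the characteristic disk through $y$ into $L$, so that the germ of $L$ at $x$ indeed lies in $S$.

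Applying the Main Theorem, $S$ is contained in a standard model $g \circ i(Q^3) \subset Q^n$; since $\dim S = 3$, $S$ is an open subset of this model. Restricting back to $D^{\Rmnum{4}}_n$ and observing that $f(D^{\Rmnum{4}}_3)$ is a complete 3-dimensional image of a proper map, one argues that the automorphism $g$ must in fact preserve $D^{\Rmnum{4}}_n$, since otherwise the intersection $g \circ i(Q^3) \cap D^{\Rmnum{4}}_n$ could not absorb the entire image. Hence $S$ is a totally geodesic $D^{\Rmnum{4}}_3$ inside $D^{\Rmnum{4}}_n$, and the map $f: D^{\Rmnum{4}}_3 \to S$ between equidimensional bounded symmetric domains is, by standard rigidity of proper holomorphic self-maps, an isometric totally geodesic embedding up to a normalizing constant.
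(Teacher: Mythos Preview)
Your overall strategy coincides with the paper's: embed via Borel, invoke the Mok--Tsai result that properness forces $f$ to send minimal discs to minimal discs (this is exactly what the paper cites for your step on MRC-preservation), apply the Main Theorem to conclude that $S=f(D^{\Rmnum{4}}_3)$ lies in a standard model $g\circ i(Q^3)$, and then argue that the model must in fact be the totally geodesic one.

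The gap is in your last step. You assert that ``the automorphism $g$ must in fact preserve $D^{\Rmnum{4}}_n$, since otherwise the intersection $g\circ i(Q^3)\cap D^{\Rmnum{4}}_n$ could not absorb the entire image,'' and then invoke rigidity of proper self-maps ``between equidimensional bounded symmetric domains.'' But a priori $g\circ i(Q^3)\cap D^{\Rmnum{4}}_n$ is just some bounded domain inside a non-flat standard $Q^3$; nothing you have said forces it to be biholomorphic to $D^{\Rmnum{4}}_3$, so the self-map rigidity theorem does not apply, and ``completeness of the image'' does not by itself rule out the $M^-$-component of $g$. This is precisely the point where the paper introduces an additional argument: since $f$ is a biholomorphism on each minimal disc (by properness and the Mok--Tsai result), the Schwarz lemma gives $g(\alpha,\bar\alpha)=f^*h(\alpha,\bar\alpha)$ for every minimal rational tangent $\alpha$, and a polarization argument then shows $g=f^*h$ on all of $T(D^{\Rmnum{4}}_3)$, i.e.\ $f$ is isometric (up to a normalizing constant). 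Once $f$ is an isometry one uses that $\textnormal{Iso}(D^{\Rmnum{4}}_n)\cap M^-=\{\textnormal{id}\}$ to conclude the standard model is the flat (totally geodesic) one. You should replace your heuristic about ``absorbing the entire image'' with this metric/polarization step.
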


$D^{\Rmnum{4}}_{n}$ denotes the type $\Rmnum{4}$ bounded symmetric domain of dimension $n$, it is the non-compact symmetric space dual to hyperquadric, i.e., the Borel embedding $i:D^{\Rmnum{4}}_{n}\hookrightarrow Q^n$ realises $D^{\Rmnum{4}}_{n}$ as an open subset of $Q^n$ contained in some Harish-Chandra coordinate chart $U\cong\mathbb{C}^n\subset Q^n$. Meanwhile, the minimal disc $\Delta\subset D^{\Rmnum{4}}_n$ is realised as an open subset of minimal rational curve $L\cong\mathbb{P}^1\subset Q^n$ in light of fact that $\Delta$ itself is the non-compact symmetric space dual to $\mathbb{P}^1$ (cf. \cite{[Mok 89]}). Proof of Theorem \ref{Thm 1} relies on the fact that minimal discs of the domain manifold $D^{\Rmnum{4}}_{3}$ is mapped biholomorphically to those of the target manifold $D^{\Rmnum{4}}_n$ due to properness (\cite{[MoT 92]}). Consider the composition $f'=i\circ f:D^{\Rmnum{4}}_3\hookrightarrow D^{\Rmnum{4}}_n\hookrightarrow Q^n$. Let $S$ be the image of $D^{\Rmnum{4}}_3$ under $f'$. Then $S\subset Q^n$ is a local complex submanifold satisfying both conditions in the Main Theorem provided that $f$ preserves minimal discs. In this way, it is hopeful that we can give a new and more intrinsic proof for Theorem \ref{Thm 1} through our Main Theorem without computing the second fundamental form.

\section{Geometry of a standard model in $Q^m$}
\label{section 2}
In this section, we first of all briefly recall some fundamental theories of $Q^n$ and $D^{\Rmnum{4}}_n$ as symmetric spaces. The reader may refer to \cite{[Mok 89]}, \cite{[Hel 78]}, \cite{[Wolf 72]}. Denote by Iso$(M)$ the group of isometry of symmetric manifold $M$. Then $G_0=$Iso$(D^{\Rmnum{4}}_n)$ and $G_c=$Iso $(Q^n)$ are non-compact and compact real forms of complex simple Lie group $G^{\mathbb{C}}=SO(n+2,\mathbb{C})$ respectively. $K=SO(n,\mathbb{R})\times SO(2,\mathbb{R})$ is the isotropy subgroup about some reference point $o$, making $D^{\Rmnum{4}}_n,\ \ Q^n$ the quotient spaces, i.e., $D^{\Rmnum{4}}_n=G_0/K,\ \ Q^n=G_c/K$. Let $\frak{g}_0,\frak{g}_c,\frak{g},\frak{k}$ be the corresponding Lie algebras of $G_0,G_c,G^{\mathbb{C}},K$, then:

\centerline{$\frak{g}_0=\frak{k}\oplus\frak{m},\ \ \frak{g}_c=\frak{k}\oplus\sqrt{-1}\frak{m},\ \ \frak{g}=\frak{k}^{\mathbb{C}}\oplus\frak{m}^{\mathbb{C}}.$}

The superscript $\mathbb{C}$ denotes complexification.  The isotropy sub algebra $\frak{k}=\frak{so}(m)+\frak{so}(2)$ has a one-dimensional centre $j$ which gives rise to a complex structure on $Q^m$ and $D^{\Rmnum{4}}_m$ and a decomposition $\frak{m}^{\mathbb{C}}=\frak{m}^-\oplus\frak{m}^+$, where $\frak{m}^-,\frak{m}^+$ are the eigenspaces of the the adjoint action ad$(j)$ with the eigenvalue $-i,+i$ respectively. Furthermore, $\frak{m}^-,\frak{m}^+$ are are Abelian subalgebras of $\frak{g}$ whose Abelian Lie subgroup is denoted by $M^-,M^+$ respectively. We are interested in the action of $M^-$ which produces non-totally geodesic (``non-flat'') standard model $Q^n$ of $Q^m$. 

Recall that we have expressed $Q^m=\{z^2_1+\cdots z^2_m-2z_{m+1}z_{m+2}=0\}\subset\mathbb{P}^{m+1}$ and the flat (totally geodesic) standard model $Q^n=\{z^2_1+\cdots+z^2_n-2z_{m+1}z_{m+2}=0\}\subset Q^m$. From now on let $o$ denote the reference point $o=[0,...,0,1,0]\in Q^m$ unless otherwise stated. With respect to $o$, the subgroup $M^-$ is expressible in the following form:

\begin{center}
$ $$\left[\begin{array}{cc}
 I_m &     B\\
 C      & D
\end{array}\right]
$$ $, \ \
\end{center}

\medskip
 
 where 
\medskip
\begin{tabular}{lll}
$ B= \sqrt{2} $$\left[\begin{array}{cc}
 0 & a_1 \\
 \vdots & \vdots \\
 0 & a_m 
\end{array}\right]
$$,$ 
& 
$C= \sqrt{2}$$\left[\begin{array}{ccc}
 a_1 & \cdots & a_m \\
 0 & \cdots & 0 
\end{array}\right]
$$, $
&
$D=$$\left[\begin{array}{cc}
 1 & a^2_1+\cdots +a^2_m \\
 0 & 1 
\end{array}\right]
$$ $.

\end{tabular} for $a_1,...,a_m\in\mathbb{C}$. It can be seen that there exists a unique parameter set $\{a_1,...,a_m\}$ associated to any  $g\in M^-$. 

Fix a Harish-Chandra coordinate chart $W\subset Q^m$ containing $o$, where $W=\{[z_1,...,z_{m+2}]\in Q^m| z_{m+1}\neq 0\}\cong\mathbb{C}^m$.  By abuse of notation, we denote by $(z_1,...,z_m)$ the Harish-Chandra coordinate in the open set $W$. In this sense, $o=(0,...,0)\in W$. Then the flat standard model $Q^n$ is expressed as $\{(z_1,...,z_m)|z_{n+1}=\cdots =z_{m}=0\}\cong\mathbb{C}^n$ if it is restricted to $W$. For some $g\in M^-$with parameter $\{a_1,...,a_m\}$, the flat standard model $Q^n$ is transformed in the following way in terms of the Harish-Chandra coordinate in $W$:

\begin{align*}
&z'_i=z_i+\sqrt{2}a_iz_{m+2},\ \ 1\leq i\leq n\\
&z'_l=\sqrt{2}a_lz_{m+2},\ \ n+1\leq l\leq m\\
&z'_{m+1}=\sqrt{2}\sum\limits_{i=1}^{n}{a_iz_i}+1+(a^2_1+\cdots+a^2_m)z_{m+2}\\
&z'_{m+2}=z_{m+2}
\end{align*} where $z_{m+2}=\frac{1}{2}\sum\limits_{i=1}^{n}{z^2_i}$. From now on, let $i,j,k$ always range in $\{1,...,n\}$ and $l$ always range in $\{n+1,...,m\}$ until otherwise stated. The above computation immediately shows:


\begin{lemma}
\label{Lemma 2.1}
Denote by $\mathscr{S}(Q^n)$ the subgroup of $M^-$ which leaves the flat standard model $Q^n$ invariant as a set, then $\mathscr{S}(Q^n)$ is of the form $ $$\left[\begin{array}{cc}
 I_m &     B'\\
 C'      & D'
\end{array}\right]
$$ $\ \ where $B',C',D'$ is obtained from $B,C,D\ \ \textnormal{by making} a_{n+1}=\cdots =a_m=0$. Moreover, dim$(M^-/\mathscr{S}(Q^n))=m-n$, $M^-/\mathscr{S}(Q^n)$ is parameterized by $\{a_{n+1},...,a_m\}$.
\end{lemma}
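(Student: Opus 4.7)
The plan is to read the invariance condition directly off the transformation formulas displayed just before the lemma. Take $g\in M^-$ with parameters $(a_1,\ldots,a_m)$, and apply it to an arbitrary point of the flat $Q^n$ restricted to the Harish-Chandra chart $W$, i.e.\ a point with $z_{n+1}=\cdots=z_m=0$, $z_{m+1}=1$, and $z_{m+2}=\tfrac{1}{2}\sum_{i=1}^n z_i^2$. The third line of the displayed formulas becomes
\begin{equation*}
z'_l \;=\; \sqrt{2}\,a_l\,z_{m+2} \;=\; \tfrac{\sqrt{2}}{2}\,a_l\sum_{i=1}^n z_i^2,\qquad n+1\le l\le m.
\end{equation*}
So $g(Q^n\cap W)\subset Q^n$ is equivalent to the polynomial identities $a_l\sum_{i=1}^n z_i^2\equiv 0$ on $\mathbb{C}^n$ for each $l\in\{n+1,\ldots,m\}$. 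Since $\sum z_i^2$ is not identically zero, this forces $a_{n+1}=\cdots=a_m=0$.

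Conversely, with those $a_l$ set to $0$, the first, third and fourth formulas show that the image point still has $z'_{n+1}=\cdots=z'_m=0$ and satisfies $\sum_{i=1}^n (z'_i)^2 - 2 z'_{m+1}z'_{m+2}=0$ (this is just the assertion that $g$ is an isometry preserving the quadric form, restricted to the slice). Hence such a $g$ does leave $Q^n$ invariant as a set, identifying $\mathscr{S}(Q^n)$ with the block form claimed, where $B',C',D'$ arise from $B,C,D$ by setting $a_{n+1}=\cdots=a_m=0$.

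For the dimension statement, note that the paragraph preceding the lemma established that $g\in M^-$ is determined by the $m$-tuple $(a_1,\ldots,a_m)\in\mathbb{C}^m$, so $\dim M^-=m$, and by what we just proved $\dim\mathscr{S}(Q^n)=n$. Therefore $\dim\bigl(M^-/\mathscr{S}(Q^n)\bigr)=m-n$, and a set of coset representatives is given by those $g$ with $a_1=\cdots=a_n=0$, naturally parametrized by $(a_{n+1},\ldots,a_m)\in\mathbb{C}^{m-n}$.

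The only conceptual point to be careful with is that $\mathscr{S}(Q^n)$ is defined by global invariance of $Q^n\subset Q^m$, while the displayed transformation formulas describe $g$ only on the chart $W$; but $Q^n\cap W$ is Zariski-dense in $Q^n$ and $g$ acts algebraically, so local invariance in $W$ propagates to global invariance. Beyond this remark, the whole argument is a direct reading of the formulas, so there is no real obstacle.
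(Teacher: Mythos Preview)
Your proposal is correct and is exactly the approach the paper takes: the paper does not give a separate proof of this lemma but simply writes ``The above computation immediately shows'' before stating it, meaning the lemma is intended to be read off the displayed transformation formulas precisely as you do. Your write-up merely spells out the details (including the Zariski-density remark to pass from $W$ to the global statement) that the paper leaves implicit.
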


\textbf{Remark} $M^-$ does not leave the flat standard model $Q^n$ invariant as a set, such flat $Q^n$ is not \textsl{invariant geodesic} submanifold of $Q^m$ in the sense of definition 4.1 of \cite{[Tsai 93]}.

By virtue of lemma \ref{Lemma 2.1}, we denote by $M(a_{n+1},...,a_m)$\ \ the non-flat standard model obtained from the action of $g\in M^-\subset$Aut$(Q^m)$\ \ on the flat $Q^n$ where $g$ is parametrised by $\{a_{n+1},...,a_m\}$ and the non-flat $Q^n$ is uniquely determined by the parameter set $\{a_{n+1},...,a_m\}$. We can assume without loss of generality that all of $\{a_{n+1},...,a_m\}$ are non-zero, for if some $a_l=0,\ \ l\in\{n+1,...,m\}$, then $z'_l$ vanishes, which means it is reduced to considering the pair $(Q^n,Q^{m-1})$. Dividing all $z'_is,z'_ls$\ \ by $z'_{m+1}$\ \ whenever a small neighborhood $U\subset Q^n$\ \ of $o$\ \ is chosen such that $z'_{m+1}\neq 0$,\ \ routine computation shows:


\begin{lemma}
\label{Lemma 2.2}
Assuming all of $a_{n+1},...,a_m$\ \ are non-zero, a small neighborhood $ V\subset M(a_{n+1},...,a_m)$\ \ around $o$\ \ is representable in $W\cong\mathbb{C}^m$\ \ as a subvariety defined by polynomials $\{(z_1,...,z_m)\in \mathbb{C}^m|z^2_1+\cdots+z^2_m=\frac{\sqrt{2}}{a_l}z_l,\ \ l=n+1,...,m\}$.\ \ Solving $z_{n+1},...,z_m$\ \ in terms of $z_1,...,z_n$,\ \ it can be expressed as a graph $\{(z_1,...,z_n,g_{n+1},...,g_m)\}$\ \ in $W$\ \ over a small neighborhood $U\subset Q^n$\ \ near $o$\ \ where

\begin{center}
 $2g_l(a^2_{n+1}+\cdots+a^2_m)=\sqrt{2}a_l-\sqrt{2a^2_l-4a^2_l(a^2_{n+1}+\cdots+a^2_m)(z^2_1+\cdots+z^2_n)}$.
\end{center} Performing Taylor expansion near 0 about $\omega=z^2_1+\cdots+z^2_n$

\begin{center}
$g_l=\frac{1}{2}\omega(\sqrt{2}a_l+\frac{1}{\sqrt{2}}aa_l\omega+\cdots)\ \ \ \ \ \ \ \ \ \ \ \ \ (*)$
\end{center} where $a=a^2_{n+1}+\cdots+a^2_m$.
\end{lemma}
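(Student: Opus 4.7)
The plan is to apply the explicit $M^-$-action formulas displayed immediately above the statement to the flat standard model $Q^n=\{z_{n+1}=\cdots=z_m=0\}$, then dehomogenise to reach the Harish-Chandra chart $W$. By Lemma \ref{Lemma 2.1} I may choose a coset representative $g\in M^-$ modulo $\mathscr{S}(Q^n)$ with $a_1=\cdots=a_n=0$, since this does not change the image $M(a_{n+1},\ldots,a_m)$. Setting $\omega:=z_1^2+\cdots+z_n^2$ so that $z_{m+2}=\tfrac{1}{2}\omega$ on the flat $Q^n$, the transformation collapses to $z'_i=z_i$, $z'_l=\tfrac{a_l}{\sqrt{2}}\omega$, $z'_{m+1}=1+\tfrac{a}{2}\omega$, $z'_{m+2}=\tfrac{1}{2}\omega$.

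Next I will dehomogenise by $z'_{m+1}$ (nonzero near $o$) and re-use the name $(z_1,\ldots,z_m)$ for the resulting affine coordinates. A direct substitution shows that $\tfrac{\sqrt{2}}{a_l}z_l=\tfrac{\omega}{1+\tfrac{a}{2}\omega}$ for every $l\in\{n+1,\ldots,m\}$, and this same quantity equals $2z_{m+2}=z_1^2+\cdots+z_m^2$ by the quadric relation on $Q^m\cap W$. This yields the $m-n$ defining equations $\sum_{k=1}^m z_k^2=\tfrac{\sqrt{2}}{a_l}z_l$ claimed in the statement; a dimension count then shows they cut out $V$ locally.

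To write $V$ as a graph over $(z_1,\ldots,z_n)$, I will exploit the observation that the defining equations force $z_l/a_l$ to be a single scalar $\lambda$ independent of $l$, because all left-hand sides coincide. Substituting $z_l=a_l\lambda$ reduces the system to the scalar quadratic $a\lambda^2-\sqrt{2}\lambda+\omega=0$, with roots $\lambda=(1\pm\sqrt{1-2a\omega})/(\sqrt{2}a)$. The requirement $\lambda(0)=0$ (so that $o\in V$) forces the minus sign; multiplying by $a_l$ and clearing denominators produces exactly the closed form $2g_l\,a=\sqrt{2}a_l-\sqrt{2a_l^2-4a_l^2 a\omega}$ in the statement. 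The expansion $(*)$ then follows from the binomial series $\sqrt{1-2a\omega}=1-a\omega-\tfrac{1}{2}a^2\omega^2-\cdots$ multiplied by the prefactor $a_l/(\sqrt{2}a)$.

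The argument is essentially computational, and I do not anticipate a serious obstacle. The two points that require care are the choice of square-root branch, which is pinned down by $o\in V$, and the reduction via the common ratio $\lambda$, which prevents the appearance of spurious components of the polynomial system. The assumption that each $a_l$ is nonzero enters precisely when dividing by $a_l$ in the defining equations.
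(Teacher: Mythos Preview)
Your argument is correct and follows exactly the route the paper sketches just before the lemma: apply the displayed $M^-$-action to the flat $Q^n$ and divide through by $z'_{m+1}$, after which the rest is straightforward algebra. Your use of Lemma~\ref{Lemma 2.1} to set $a_1=\cdots=a_n=0$ and the introduction of the common ratio $\lambda=z_l/a_l$ to reduce the system to a single scalar quadratic are clean ways to organise what the paper simply calls ``routine computation,'' but they do not constitute a genuinely different approach.
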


For clarity, we fix the meanings of the notations $W,U,V$ for open sunsets on the ambient manifold $Q^m$ defined as above. Equipped with these preparations, we go back to our original settings. Suppose a fixed germ of complex submanifold $S$\ \ lying in $W$\ \ satisfying the conditions in Main Theorem. With the help of translation $M^+$=exp$(\frak{m}^+)$\ \ and linear transformation $K^{\mathbb{C}}$=exp$(\frak{k}^{\mathbb{C}})$,\ \ we can further assume without loss of generality that $o\in S$\ \ and $T_oS=T_oQ^n$.


\begin{proposition}
\label{proposition 2.3}
Suppose a complex submanifold $S\subset Q^m$\ \ satisfies the conditions in Main Theorem, then for any fixed $p\in S$,\ \ there exists a unique standard model $M(a_{n+1},...,a_m)$\ \ for some $\{a_{n+1},...,a_m\}$\ \ passing through $p$,\ \ which is tangent to $S$\ \ to order 2 at $p$.
\end{proposition}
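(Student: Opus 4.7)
My first move would be to reduce to the case $p = o$ with $T_pS = T_oQ^n$, invoking homogeneity. Since $\textrm{Aut}(Q^m)$ acts transitively on $Q^m$, pick $g\in\textrm{Aut}(Q^m)$ sending $p$ to $o$. Condition 1 of the Main Theorem implies that the quadratic form cutting out $\mathscr{C}_p(Q^m)$ restricts non-degenerately to $T_pS$ (smoothness of the $Q^{n-2}$ intersection). By Witt's theorem over $\mathbb{C}$, the complexified isotropy $K^{\mathbb{C}}$ acts through $SO(m,\mathbb{C})$ transitively on the set of $n$-dimensional non-degenerate subspaces of $T_oQ^m$, so I can post-compose $g$ with an element of $K^{\mathbb{C}}$ so that $g_*(T_pS) = T_oQ^n$. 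Both conditions of the Main Theorem, the notion of standard model, and that of second-order tangency are all $\textrm{Aut}(Q^m)$-invariant, so proving the proposition at $p$ reduces to proving it for the transformed germ at $o$.

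With $o\in S$ and $T_oS = T_oQ^n$, write $S$ inside the Harish-Chandra chart $W$ as the graph $z_l = f_l(z_1,\ldots,z_n)$ with $f_l(0)=0$ and $df_l(0)=0$. I would then exploit condition 2 through the following observation: the minimal rational curves of $Q^m$ through $o$ are, in these coordinates, the affine lines $t\mapsto t\xi$ for $\xi\in\mathbb{C}^m$ with $\xi_1^2+\cdots+\xi_m^2=0$, and those whose tangent lies in $T_oQ^n$ are precisely those with $\xi_{n+1} = \cdots = \xi_m = 0$ and $\sum_{i=1}^n\xi_i^2 = 0$. Condition 2 forces each such line into $S$, so
\ebx
f_l(t\xi_1,\ldots,t\xi_n) = 0 \qquad (n+1\leq l\leq m)
\eex
for every $t$ and every such $\xi$. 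Thus each $f_l$ vanishes identically on the affine quadric cone $\{\omega = 0\}\subset\mathbb{C}^n$, where $\omega = z_1^2+\cdots+z_n^2$.

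Since $\omega$ is irreducible for $n\geq 3$, the analytic Nullstellensatz yields $f_l = \omega\cdot h_l$ for some holomorphic germ $h_l$ at $0$. Comparing with Lemma \ref{Lemma 2.2}, the standard model $M(a_{n+1},\ldots,a_m)$ is the graph $z_l = g_l(z_1,\ldots,z_n)$ with $g_l = \frac{a_l}{\sqrt{2}}\omega + O(\omega^2)$. Second-order tangency of $S$ and $M(a_{n+1},\ldots,a_m)$ at $o$ is then equality of 2-jets, i.e. $h_l(0)\,\omega = \frac{a_l}{\sqrt{2}}\,\omega$, which forces $a_l = \sqrt{2}\,h_l(0)$. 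This prescription determines the parameters uniquely and delivers both existence and uniqueness.

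The main obstacle I anticipate is the divisibility step $\omega\mid f_l$: it depends crucially on the irreducibility of $\omega$ and hence on $n\geq 3$ (the case $n=2$ would behave differently since $\omega$ then factors as $(z_1+iz_2)(z_1-iz_2)$). A subsidiary subtlety is the initial reduction to $T_pS = T_oQ^n$, where one must carefully use condition 1 to produce the non-degenerate restricted quadratic form that makes the transitivity of $K^{\mathbb{C}}$ applicable. Beyond these, the argument is essentially a Taylor-coefficient comparison driven by the explicit expansion $(*)$ of Lemma \ref{Lemma 2.2}.
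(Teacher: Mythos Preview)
Your proposal is correct and follows essentially the same route as the paper: reduce to $p=o$ with $T_oS=T_oQ^n$ via $M^+$ and $K^{\mathbb{C}}$, use condition~2 to force each $f_l$ to vanish on $\{\omega=0\}$, factor $f_l=\omega\cdot h_l$, and match the resulting 2-jet against the expansion $(*)$ of Lemma~\ref{Lemma 2.2} to read off $a_l$ uniquely. Your version is in fact slightly more careful than the paper's, in that you justify the factorisation via irreducibility of $\omega$ (hence the hidden hypothesis $n\ge 3$) and spell out the Witt-type transitivity underlying the reduction; the only cosmetic discrepancy is your normalisation $f_l=\omega h_l$ versus the paper's $f_l=\tfrac{1}{2}\omega h_l$, which accounts for your $a_l=\sqrt{2}\,h_l(0)$ against the paper's $a_l=h_l(0)/\sqrt{2}$.
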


\begin{proof}

Suppose $p=o,\ \ T_oS=T_oQ^n$.\ \ Evidently $S$\ \ is expressible as a graph over $U$\ \ as $(z_1,...,z_n,f_{n+1},...,f_m)$,\ \ where $f_ls$\ \ are holomorphic functions in $(z_1,...,z_n)$\ \ defined on $U$\ \ with all the first order derivatives vanish at $o$,\ \ i.e., $\frac{\partial f_l}{\partial z_i}(0)=0$.\ \ Then $\mathscr{C}_o(S)=\mathbb{P}(T_oS)\cap \mathscr{C}_o(Q^m)=\{[\lambda_1,...,\lambda_m]\in\mathbb{P}(T_oQ^m)|\lambda^2_1+\cdots+\lambda^2_n=0,\ \ \lambda_{n+1}=\cdots=\lambda_m=0\}=\mathscr{C}_o(Q^n)\cong Q^{n-2}$ 

For minimal rational curves $L$\ \ passing through $o$\ \ with $T_oL=(\lambda_1,...,\lambda_n)\in \widetilde{\mathscr{C}}_o(S)$,\ \ since $L\subset S$, we have for small $z_is$\ \ with $z^2_1+\cdots+z^2_n=0,\ \ f_l(z_1,...,z_n)\equiv 0$.\ \ Thus we get factorization of $f_ls$\ \ on $U$\ \ (may be shrunk if necessary):

\begin{center}
$f_l=\frac{1}{2}(z^2_1+\cdots+z^2_n)h_l(z_1,...,z_n)\ \ \ \ \ \ \ \ \ \ \ \ \ \ \ (**)$
\end{center} 

where $h_ls$\ \ are holomorphic on $U$. 

\begin{center}
$\frac{\partial^2 f_l}{\partial z_j \partial z_k}(0)=h_l(0)\delta_{jk},\ \ \delta_{jk}$\ \ being Kronecker symbol.
\end{center}

By virtue of the expression (*), 

\begin{center}
$\frac{\partial^2 g_l}{\partial z_j \partial z_k}(0)=\sqrt{2}a_l\delta_{jk}$
\end{center}

Suppose some standard model $M(a_{n+1},...,a_m)$\ \ passing through $o$\ \ where $a_l=\frac{h_l(0)}{\sqrt{2}}$,\ \ then $M(\frac{h_{n+1}(0)}{\sqrt{2}},...,\frac{h_m(0)}{\sqrt{2}})$\ \ is tangent to $S$\ \ to order 2 at $o$\ \ in the sense of $\frac{\partial^2 g_l}{\partial z_j \partial z_k}(0)=\frac{\partial^2 f_l}{\partial z_j \partial z_k}(0)=\sqrt{2}a_l\delta_{jk}$\ \ when the standard model is expressed as a graph over $U$\ \ according to Lemma \ref{Lemma 2.2}. 

\end{proof}

\section{Proof of the Main Theorem}
\label{section 3}

Now we come to the stage of proving our Main Theorem. If we keep the settings and notations laid out in the previous section, evidently it suffices to establish the following theorem:

\begin{theorem}
\label{theorem 3.1}
$S$\ \ is an open subset of the standard model $M(\frac{h_{n+1}(0)}{\sqrt{2}},...,\frac{h_m(0)}{\sqrt{2}})$. This standard model is non-flat unless $h_{n+1}(0)=\cdots =h_{m}(0)=0$.
\end{theorem}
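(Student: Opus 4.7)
The plan is to upgrade the order-two tangency from Proposition~\ref{proposition 2.3} into a full local equality $S=M$, where $M:=M(\tfrac{h_{n+1}(0)}{\sqrt 2},\ldots,\tfrac{h_m(0)}{\sqrt 2})$. Writing $M$'s graph as $(z_1,\ldots,z_n,g_{n+1},\ldots,g_m)$ with $g_l=\tfrac12\omega h_l^{\mathrm{std}}$, Lemma~\ref{Lemma 2.2} gives the closed form $h_l^{\mathrm{std}}(z)=2h_l(0)/(1+\sqrt{1-2a\omega})$, where $a=\tfrac12\sum_l h_l(0)^2$. So it suffices to prove $h_l\equiv h_l^{\mathrm{std}}$ near $o$.

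The first step is to recast the MRC-preservation hypothesis as a functional equation on $f_l$. A direct projective computation shows that in the Harish-Chandra chart $W\cong\mathbb{C}^m$, MRCs of $Q^m$ through any $q\in W$ are exactly the affine lines $q+wv$ with $\sum_{i=1}^m v_i^2=0$, so the hypothesis becomes
\begin{equation*}
f_l(z+wv)=f_l(z)+w\sum_{i=1}^n v_i\,\partial_i f_l(z),\qquad\forall\,w,\ \forall\,l,
\end{equation*}
whenever $Q_z(v):=\sum_i v_i^2+\sum_l(\sum_i v_i\partial_i f_l(z))^2=0$. Specialising the base point to $z_0=wv_0$ with $v_0$ null (so $z_0$ lies on an MRC through $o$ in $S$), I use $f_l(z_0)=0$ and $\partial_i f_l(z_0)=wv_{0,i}h_l(z_0)$ to reduce the null-cone condition at $z_0$ to $|v|^2+w^2H(z_0)\langle v_0,v\rangle^2=0$ (with $H:=\sum_l h_l^2$), and after simplification obtain the explicit formula
\begin{equation*}
h_l(wv_0+sv)=\frac{h_l(wv_0)}{1-\tfrac12 s w H(wv_0)\langle v_0,v\rangle}
\end{equation*}
for every $v$ on this cone with $\langle v_0,v\rangle\ne 0$.

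The second step extracts the $s^2$-coefficient of the functional equation. Expanding both sides in $s$ and matching, one obtains $\omega(z_0) D^2h_l\cdot v^{\otimes 2}+4\langle z_0,v\rangle\,v\cdot\nabla h_l(z_0)+2|v|^2 h_l(z_0)=0$ on the null cone at $z_0$. At $z_0=wv_0$ the first term drops out ($\omega(wv_0)=0$), and after substituting $|v|^2=-w^2H(wv_0)\langle v_0,v\rangle^2$ and dividing by $2w\langle v_0,v\rangle$ one gets $v\cdot\nabla h_l(wv_0)=\tfrac12 wH(wv_0)h_l(wv_0)\langle v_0,v\rangle$. The difference of the two sides is a linear form in $v$ vanishing on the non-degenerate quadric $\{Q_{wv_0}=0\}$; since a linear form that vanishes on a non-degenerate quadric must vanish identically (Nullstellensatz), this forces $\nabla h_l(wv_0)=\tfrac12 wH(wv_0)h_l(wv_0)v_0$. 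Contracting with $v_0$ and using $|v_0|^2=0$ gives $\tfrac{d}{dw}h_l(wv_0)=0$, whence $h_l(wv_0)=h_l(0)$ and $H(wv_0)=\sum_l h_l(0)^2=2a$ for all small $w$ and all null $v_0$.

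Substituting these constants into the formula of the first step yields $h_l(wv_0+sv)=h_l(0)/(1-swa\langle v_0,v\rangle)$. A direct computation gives $\omega(wv_0+sv)=2sw\langle v_0,v\rangle(1-swa\langle v_0,v\rangle)$, and solving this quadratic in $\xi:=swa\langle v_0,v\rangle$ on the branch $\xi\to 0$ as $\omega\to 0$ yields $1-\xi=(1+\sqrt{1-2a\omega})/2$, so
\begin{equation*}
h_l(wv_0+sv)=\frac{2h_l(0)}{1+\sqrt{1-2a\,\omega(wv_0+sv)}}=h_l^{\mathrm{std}}(wv_0+sv).
\end{equation*}
Since the family of points $\{wv_0+sv:v_0\text{ null},\ Q_{wv_0}(v)=0,\ \langle v_0,v\rangle\ne 0\}$ sweeps out a non-empty Zariski-open subset of a neighbourhood of $o$ (dimension count $2(n-1)\ge n$), holomorphicity forces $h_l\equiv h_l^{\mathrm{std}}$ on $U$, hence $f_l=g_l$ and $S$ is an open subset of $M$; when every $h_l(0)=0$ the standard model collapses to the flat one $\{z_{n+1}=\cdots=z_m=0\}\cong Q^n$. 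The main technical hurdle is the $s^2$-coefficient analysis: recognising that after inserting the null-cone relation the identity becomes a linear-form vanishing condition on a smooth quadric, which via the Nullstellensatz produces the decisive identity $\nabla h_l(wv_0)\parallel v_0$ that drives the constancy of $h_l$ along null lines.
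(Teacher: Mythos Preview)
Your argument is correct and shares its central computation with the paper's proof: both establish that $\nabla h_l(wv_0)=\tfrac12 w H(wv_0)h_l(wv_0)\,v_0$ at points of a null line through $o$ (your Nullstellensatz step is exactly the paper's choice of $n$ linearly independent $\lambda^k$ on the quadric $(\dag')$), and both contract with $v_0$ to get $h_l(wv_0)\equiv h_l(0)$.

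Where you genuinely diverge is in the end-game. The paper, having obtained constancy along null lines through $o$, shows that $S$ and $M$ have identical VMRTs and identical second-order jets at every point $L(t)$, and then \emph{iterates}: it reapplies Proposition~\ref{proposition 2.3} at $L(t)$ and invokes ``adjunction of minimal rational curves'' to propagate the tangency over an open set. You instead exploit the \emph{full} affine-linearity of $f_l$ along MRCs (not only the $s^2$-coefficient) to obtain the closed formula $h_l(wv_0+sv)=h_l(0)/(1-\xi)$, solve $\xi$ in terms of $\omega$, and match it directly with the Taylor expansion of Lemma~\ref{Lemma 2.2}; a density argument then finishes in one stroke. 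Your route is more self-contained --- no iterative scheme or appeal to \cite{[HoM 10]} is needed --- at the price of the explicit algebra with $h_l^{\mathrm{std}}$ and the verification that the two-parameter family $\{wv_0+sv\}$ covers an open set. The paper's route is lighter on formulas but leaves the propagation step comparatively sketchy. Both approaches need $n\ge 2$ so that the projective quadric $\{Q_{z_0}(v)=0\}$ has enough points to force the linear form to vanish; you might note in passing that this quadric is indeed non-degenerate for every $w$ (its Gram matrix $I+w^2H(z_0)\,v_0v_0^{\mathsf T}$ has determinant $1$ since $|v_0|^2=0$), which your Nullstellensatz step uses.
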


Denote by $M$ the unique standard model $M(\frac{h_{n+1}(0)}{\sqrt{2}},...,\frac{h_m(0)}{\sqrt{2}})$ with second order tangency to $S$ at $o$. We establish Theorem \ref{theorem 3.1} based on the idea of \textsl{adjunction of minimal rational curves} (cf. \cite{[HoM 10]}). This adjunction process relies on the notion of \textsl{parallel transport of VMRT} along minimal rational curves (cf. \cite{[HoM 10]}). The proof of Theorem \ref{theorem 3.1} goes essentially in the direction of establishing the parallel transport of VMRT for the case of hyperquadrics.

\begin{proof}
In $W\cong \mathbb{C}^m$,\ \ we always identify $T_xQ^n,\ \ T_yQ^m$\ \ with some fixed $\mathbb{C}^n,\ \ \mathbb{C}^m$\ \ at $\forall x\in U,\ \ \forall y\in W$,\ \ respectively. Fix some $\alpha=(\alpha_1,...,\alpha_n,0...,0)\in \widetilde{\mathscr{C}}_o(S)$,\ \ i.e. $\sum\limits_i{\alpha^2_i=0}$\ \ and a line $L(t)$\ \ passing through $o=L(0)$\ \ with $T_oL=\alpha$\ \ parameterized by small $t,\ \ L(t):=(t\alpha_1,...,t\alpha_n,0...,0)\subset U$.\ \ We claim that for any fixed $\alpha\in\widetilde{\mathscr{C}}_o(S),\ \ h_l(t\alpha)$\ \ is constant for small $t$,\ \ i.e., $h_l(z)\equiv h_l(0)=\sqrt{2}a_l$\ \ along lines. 

Fix any $t_0\neq 0$\ \ sufficiently small and consider $\widetilde{\mathscr{C}}_{L(t_0)}(S)=\widetilde{\mathscr{C}}_{L(t_0)}(X)\cap T_{L(t_0)}Q^m$\ \ parameterized by $(\lambda_1,...,\lambda_n)\in T_{L(t_0)}Q^n$\ \ as follows: 

\begin{center}
$\{(\lambda_1,...,\lambda_n)\in \mathbb{C}^n|\sum\limits_{i}{\lambda^2_i}+\sum\limits_{l}(\sum\limits_i\lambda_i\frac{\partial f_l}{\partial z_i}(t_0\alpha))^2=0\}\ \ \ \ \ \ \ \ \ \ \ \ (\dag)$
\end{center}

By virtue of factorisation (**) (see the proof of Proposition \ref{proposition 2.3}), $\frac{\partial f_l}{\partial z_i}=z_ih_l+\frac{1}{2}(z^2_1+\cdots+z^2_n)\frac{\partial h_l}{\partial z_i}$,\ \ and the fact that $\sum\limits_i \alpha^2_i=0$,\ \ we can reduce $\dag$\ \ to:

\begin{center}
$\{(\lambda_1,...,\lambda_n)\in \mathbb{C}^n|\sum\limits_i \lambda^2_i+t_0^2(\sum\limits_l h^2_l(t_0\alpha))(\sum\limits_i \alpha_i\lambda_i)^2=0\}$\ \ \ \ \ \ \ \ \ \ $(\dag')$
\end{center}

For $\forall \lambda=(\lambda_1,...,\lambda_n)$\ \ satisfying $\dag',$

\begin{center}
$(t_0\alpha_i+s\lambda_i;f_l(t_0\alpha+s\lambda))\subset W$\ \ for small $s$
\end{center}

is a germ of line parameterized by $s$\ \ lying on $S$\ \ for $S$\ \ is line preserving. This implies:

\begin{center}
$\frac{d^2}{ds^2}f_l(t_0\alpha+s\lambda)|_{s=0}=\sum\limits_{i,j}\frac{\partial ^2 f_l}{\partial z_i \partial z_j}(t_0\alpha)\lambda_i\lambda_j=0\ \ \ \ \ \ \ \ (\dag\dag)$
\end{center}

Again by factorization (**), 

\begin{center}
$\frac{\partial^2 f_l}{\partial z_i\partial z_j}=\delta_{ij}h_l+z_i\frac{\partial h_l}{\partial z_j}+z_j\frac{\partial h_l}{\partial z_i}+\frac{1}{2}(z^2_1+\cdots+z^2_n)\frac{\partial^2 h_l}{\partial z_i\partial z_j}$
\end{center}

and $\alpha^2_1+\cdots+\alpha^2_n=0$,\ \ we can reduce $\dag\dag$\ \ to 

\begin{center}
$(\sum\limits_i \lambda^2_i)h_l(t_0)+2t_0(\sum\limits_i \alpha_i\lambda_i)(\sum\limits_j \lambda_j\frac{\partial h_l}{\partial z_j}(t_0))=0\ \ \ \ \ \ (\dag\dag')$
\end{center}

$\alpha=(\alpha_1,...,\alpha_n,0,...,0)\in T_oS$\ \ is fixed. For $\forall t$\ \ small, we can find $n$\ \ linearly independent vectors $\lambda^1,...,\lambda^n\in\mathbb{C}^n$\ \ each $\lambda^k=(\lambda^k_1,...,\lambda^k_n)$\ \ satisfies $\dag'$\ \ and $\sum\limits_i \lambda^k_i\alpha_i\neq 0,\ \ k=1,2,...,n$.

$\dag\dag'$\ \ combined with $\dag'$\ \ then gives

\begin{center}
$\sum\limits_i \lambda^k_i\frac{\partial h_l}{\partial z_i}(t\alpha)=th_l(t\alpha)(\sum\limits_{p=n+1}^m h^2_p(t\alpha))(\sum\limits_i \alpha_i\lambda^k_i),\ \ k=1,2,...,n$.
\end{center}

This implies nothing but 
\begin{center}
$\frac{\partial h_l}{\partial z_i}=th_l(t\alpha)(\sum\limits_{p=n+1}^m h^2_p(t\alpha))\alpha_i\ \ \ \ \ \ \ \ \ (\dag\dag\dag)$.
\end{center}

On the other hand, 

\begin{center}
$\frac{dh_l}{dt}(t\alpha)=\sum\limits_i{\frac{\partial h_l}{\partial z_i}\alpha_i}=th_l(t\alpha)(\sum\limits_{i}{\alpha_i^2})(\sum\limits_{p=n+1}^m{h^2_p(t\alpha)})=0$.
\end{center}

So 

\begin{center}
$h_l(t\alpha)\equiv h_l(0)=\sqrt{2}a_l\ \ \ \ \ \ \ \ \ \ \ (\dag\dag\dag')$,
\end{center}
 
Proving the claim. By $\dag$,\ \ we show that 

\begin{center}
$\mathscr{C}_{L(t)}(S)=\mathscr{C}_{L(t)}(M),\ \ L(t)=(t\alpha_1,..,t\alpha_n,0,...,0)\in U$,
\end{center}

both parameterized by $(\lambda_1,...,\lambda_n)\in T_{L(t)}Q^n$\ \ as 

\begin{center}
$\{(\lambda_1,...,\lambda_n)\in \mathbb{C}^n|\sum\limits_i \lambda^2_i+2t^2(\sum\limits_l a^2_l)(\sum\limits_i \alpha_i\lambda_i)^2=0\}$
\end{center}

Thus we have established the identification of VMRTs of $S$\ \ and $M$\ \ at any other point $x\in L,\ \ x\neq o$\ \ on any line $L$\ \ issuing from $o$.\ \ So by line preservation property of $S$ (i.e., the second condition in Main Theorem), any line $L'$\ \ issuing from $\forall x\in L$\ \ is contained both in $S$\ \ and $M$.\ \ To prove this theorem, it suffices to show that $M$\ \ is tangent to $S$\ \ at $L(t)$\ \ to order 2 for $\forall t\neq 0$.\ \ If this is done, we can repeat the above argument which finally leads us to the conclusion that $S$ is identified with some open subset of $M$, as a result of the process of adjunction of minimal rational curves. For this, it suffices to prove:

\begin{center}
$\frac{\partial^2 g_l}{\partial z_i \partial z_j}(t\alpha)=\frac{\partial^2 f_l}{\partial z_i \partial z_j}(t\alpha),\ \ t\neq 0$.
\end{center}

This just follows from direct computation and combination of $\dag\dag\dag$\ \ and $\dag\dag\dag'$:

\begin{align*}
\frac{\partial^2 f_l}{\partial z_i \partial z_j}(t\alpha)&=\delta_{ij}h(t\alpha)+t\alpha_i\frac{\partial h_l}{\partial z_j}+t\alpha_j\frac{\partial h_l}{\partial z_i}\\
&=\sqrt{2}a_l\delta_{ij}+2\sqrt{2}\alpha_i\alpha_ja_l(a^2_{n+1}+\cdots+a^2_m)\\
&=\frac{\partial^2 g_l}{\partial z_i \partial z_j}(t\alpha)
\end{align*}

\end{proof}

Our Main Theorem allows us to give a new proof of Theorem \ref{Thm 1}, to which Tsai reduce his main result in \cite{[Tsai 93]}. Tsai's original proof relies on computing second fundamental form with respect to canonical metrics on bounded symmetric domains. Our new proof has the merit of being free from computing second fundamental forms, hence more conceptual.

\begin{proof}
By considering the radial limits of the proper mapping $f$ to the boundary of the target $D^{\Rmnum{4}}_n$ and using Cauchy integral formula along the boundary, Mok and Tsai proved in \cite{[MoT 92]} that $f$ maps the minimal discs of $D^{\Rmnum{4}}_3$ to minimal discs of $D^{\Rmnum{4}}_n$, i.e., $f$ maps germs of minimal rational curves of domain manifold $Q^3$ to those of the target manifold $Q^n$ by regarding bounded symmetric domains as open subsets of their compact duals. In view of this, we can actually regard $f$ as restriction of some $\alpha\in$Aut$(Q^n)$ to $D^{\Rmnum{4}}_3$ such that the boundary is preserved. Besides, $f$ is equivariant. We claim that $\alpha$ is actually an isometry in Iso$(D^{\Rmnum{4}}_n)$. For this, consider the (unique) canonical metrics $g,h$ for $D^{\Rmnum{4}}_3,D^{\Rmnum{4}}_n$ respectively. Through the pull-back, $g-f^*h$ (after normalization) defines an covariant $(1,1)$ tensor on $D^{\Rmnum{4}}_3$ which vanishes along minimal rational direction $\alpha$, i.e., $g(\alpha,\overline{\alpha})-f^*h(\alpha,\overline{\alpha})=0$ because $f$ is biholomorphism on minimal discs, hence isometry along minimal direction due to properness. Polarization arguments again yields the vanishing of this $(1,1)$ tensor, hence $g=f^*h$, proving our claim of isometry. It is well known that Iso$(D^{\Rmnum{4}}_n)\cap M^-=$id, i.e., $\alpha\notin M^-$ unless it is identity. While isometry group is linear action, then $f(D^{\Rmnum{4}}_3)$ is an affine linear submanifold in $D^{\Rmnum{4}}_n$, hence totally geodesic when we take the equivariance of $f$ into account. 

\end{proof}

\end{document}